\newcommand*{\medcap}{\mathbin{\scalebox{1.5}{\ensuremath{\cap}}}}%
\newcommand{\rt}{\rightarrow}
\newcommand{\st}{\stackrel}
\newcommand{\fm}{\frak{m}}
\newcommand{\MCM}{{\rm{CM}}}
\newcommand{\Tr}{{\rm{Tr}}}
\newcommand{\D}{{\rm{D}}}
\newcommand{\rep}{{\rm{rep}}}
\newcommand{\Prod}{{\rm{Prod}}}
\newcommand{\Rep}{{\rm{Rep}}}
\newcommand{\im}{{\rm{Im}}}
\newcommand{\op}{{\rm{op}}}
\newcommand{\add}{{\rm{add}}}
\newcommand{\Add}{{\rm{Add}}}
\newcommand{\Cosyz}[2]{\mho_{#1}(#2)}
\newcommand{\gen}{{\rm{gen}}}
\newcommand{\cogen}{{\rm{cogen}}}
\newcommand{\codim}{{\rm{codim}}}
\newcommand{\pd}{{\rm{pd}}}
\newcommand{\id}{{\rm{id}}}
\newcommand{\rid}{{\rm{rid}}}
\newcommand{\fd}{{\rm{fd}}}
\newcommand{\Coker}{{\rm{Coker}}}
\newcommand{\Ker}{{\rm{Ker}}}
\newcommand{\Ann}{{\rm{Ann}}}
\newcommand{\rfmod}[1]{\mbox{\rm{mod}--}{#1}}
\newcommand{\lfmod}[1]{{#1}\mbox{--\rm{mod}}}
\newcommand{\Tor}{{\rm{Tor}}}
\newcommand{\Hom}{{\rm{Hom}}}
\newcommand{\Ext}{{\rm{Ext}}}
\newcommand{\End}{{\rm{End}}}
\newcommand{\Hop}{{\rm{op}}}
\newtheorem{theorem}{Theorem}[section]
\newtheorem{corollary}[theorem]{Corollary}
\newtheorem{lemma}[theorem]{Lemma}
\newtheorem{notation}[theorem]{Notation}
\theoremstyle{definition}
\newtheorem{definition}[theorem]{Definition}
\newtheorem{example}[theorem]{Example}
\newtheorem{remark}[theorem]{Remark}
\theoremstyle{plain}
\theoremstyle{definition}
\numberwithin{equation}{section}
\begin{document}

\title[Cotilting invariance of ...]
{Cotilting invariance of the Auslander-Reiten conjecture}

\author[Kamran Divaani-Aazar, Ali Mahin Fallah, Massoud Tousi]
{Kamran Divaani-Aazar$^{(*)}$, Ali Mahin Fallah and Massoud Tousi}

\address{K. Divaani-Aazar, Department of Mathematics, Faculty of Mathematical Sciences, Alzahra University,
Tehran, Iran.}
\email{kdivaani@ipm.ir}

\address{School of Mathematics, Institute for Research in Fundamental Sciences (IPM), P.O. Box: 19395-5746,
Tehran, Iran.}
\email{amfallah@ipm.ir, ali.mahinfallah@gmail.com}

\address{Tousi, Department of Mathematics, Faculty of Mathematical Sciences, Shahid Beheshti University,
Tehran, Iran.}
\email{mtousi@ipm.ir}

\subjclass[2020]{16Gxx; 16D20; 16Exx; 16E30.}

\keywords{Auslander-Reiten conjecture; co-Noetherian ring; cotilting module; Noetherian algebra;
product-complete module; self-orthogonal module; tilting module; Wakamatsu tilting module.\\
The research of the second author is supported by a grant from IPM (No.1402160015).\\
$^{(*)}$ Corresponding author}

\begin{abstract} Let $R$ be an associative ring with identity, and let $T$ be a tilting right $R$-module, with
$S=\End(T)$. It is known that if $R$ is a Noetherian algebra that satisfies the Auslander-Reiten conjecture,
then so is $S$. In this paper, we consider the dual situation where $C$ is a cotilting right $R$-module, with
$S=\End(C)$.  We investigate the invariance of the property of satisfying the Auslander-Reiten conjecture when
passing from $R$ to $S$.
\end{abstract}

\maketitle

\section{Introduction}

Nakayama's conjecture \cite{N1}, a long-standing open problem in the representation theory of Artin algebras, asserts
that an Artin algebra $R$ is self-injective if all terms in its minimal injective resolution are projective. In their
study of this conjecture, Auslander and Reiten \cite{AR} conjectured that for an Artin algebra $R$, every indecomposable
injective left $R$-module appears as a direct summand in one of the terms of the minimal injective resolution of $R$.
This conjecture is now known as the generalized Nakayama conjecture. Auslander and Reiten \cite{AR} further proved that
the generalized Nakayama conjecture holds for all Artin algebras if and only if the following conjecture is true for all
Artin algebras. Throughout, the term {\it ring} will mean an associative ring with identity.

\vspace{.3cm}
\hspace{0.3cm}  {\bf Conjecture (ARC):} Let $R$ be a ring, and $M$ a finitely generated left $R$-module. If 
$\Ext^i_R(M,M\oplus R)=0$ for all $i>0$, then $M$ is projective.
\vspace{.3cm}

This conjecture, now known as the Auslander–Reiten conjecture, is a consequence of the finitistic dimension conjecture.
The finitistic dimension conjecture states that, for an Artin algebra $R$, the supremum of the projective dimensions of 
all finitely generated left $R$-modules with finite projective dimension is finite. If the finitistic dimension conjecture 
holds for all Artin algebras, then (ARC) also holds for all Artin algebras.

Tilting theory is crucial in the representation theory of Artin algebras. Miyashita \cite{M} explored the dual concept
of cotilting modules over Noetherian rings in 1986. Auslander and Reiten further investigated cotilting modules over
Artin algebras, viewing them as a non-commutative analogue of dualizing modules over commutative Noetherian rings
\cite{ART, AR1}. For Artin algebras, these definitions coincide.

Wei proved in \cite{W1} that (ARC) is preserved under tilting equivalences of Artin algebras.
Specifically, if $T$ is a tilting right module over an Artin algebra $R$ with the endomorphism ring $S=\End(T)$, and
if $R$ satisfies (ARC), then so does $S$. The second author generalized this result to Noetherian
algebras in \cite{Ma}. Furthermore, Wei showed in \cite{W1} that (ARC) is also preserved under
cotilting equivalences of Artin algebras. That is, if $C$ is a cotilting right module over an Artin algebra $R$ with
the endomorphism ring $S=\End(C)$, and if $R$ satisfies (ARC), then $S$ does as well. This
raises the natural question of whether a similar preservation property holds for Noetherian algebras. To address this
question, we recall the following conjecture from \cite{EM}, which serves as a dual to (ARC):

\vspace{.3cm}
\hspace{0.3cm}  {\bf Conjecture (DARC):}  Let $R$ be a ring, and $M$ an Artinian left $R$-module. If 
$\Ext^i_R(M\oplus I,M)=0$ for every injective left $R$-module $I$ and all $i>0$, then $M$ is injective.
\vspace{.3cm}

This conjecture is motivated by the equivalence between (DARC) and (ARC) for Artin algebras.
An Artin algebra $R$ satisfies (ARC) if and only if $R^{\Hop}$ satisfies (DARC). Furthermore,
if a Noetherian algebra $R$ over a commutative Noetherian complete semi-local ring satisfies (ARC),
then $R^{\Hop}$ satisfies (DARC). As our main result, we prove:

\begin{theorem}\label{1.1} Let $R$ be a right Artinian ring, let $C$ be a cotilting right $R$-module, and set $S=\End(C_R)$. 
\begin{itemize}
\item[(1)] If $S$ satisfies (ARC), then $R^{\Hop}$ satisfies (DARC).
\item[(2)]  The converse holds under the additional assumptions that $R$ is right co-Noetherian and $S$ is left Artinian.
\end{itemize}
\end{theorem}

We prove the above result in Theorem \ref{3.12}. Clearly, the above result generalizes Wei’s aforementioned result; see
\cite[Main Theorem and Corollary 3.7]{W1}. 

We end the paper by presenting an example showing that there exist rings $S$ satisfying (ARC) by Theorem \ref{1.1} whenever 
the ring $R^{\Hop}$ satisfies (DARC).

\section{Preliminaries}

Throughout, $R$ is an associative ring with identity.  We use the notation $M_R$ (respectively, $_RM$) to denote a right
(respectively, left) $R$-module. The category of all (respectively, finitely generated) right $R$-modules is denoted by
$\text{Mod-}R$ (respectively, $\text{mod-}R$), while $R\text{-Mod}$ (respectively, $R\text{-mod}$) represents the category
of all (respectively, finitely generated) left $R$-modules.

Let $M_R$ be an $R$-module. We denote by $\Add(M)$ (respectively, $\add(M)$) the class of right $R$-modules that are isomorphic
to a direct summand of a direct sum of copies (respectively, finitely many copies) of $M$. Dually, $\Prod(M)$ denotes the
class of right $R$-modules that are isomorphic to a direct summand of a direct product of copies of $M$.

Let $\mathscr{C}$ be a full subcategory of the category of $\text{Mod-}R$, and $n$ a non-negative integer. We denote by
$\widehat{\mathscr{C}}_n$ the category of all $R$-modules $M_R$ such that there exists an exact sequence $$0\rt C_n \rt \cdots
\rt C_1 \rt C_0 \rt M \rt 0$$ with each $C_i\in \mathscr{C}$.  Dually, we then denote by $\widecheck{\mathscr{C}}_n$ the category
of all $R$-modules $M_R$ such that there exists an exact sequence $$0\rt M\rt C^0\rt C^1\rt \cdots\rt C^n\rt 0$$ with each
$C^i\in \mathscr{C}$.

For an $R$-module $T_R$, denote by $\gen^* (T_R)$ the class consisting of the $R$-modules $N_R$ for which there exists an exact
sequence of the form $$\cdots\st{f_2}\longrightarrow T_1 \st{f_1}\longrightarrow T_0 \st{f_0}\longrightarrow N\longrightarrow 0$$
with each $T_i\in \add(T)$ and
$\Ext^1_R(T,\Ker f_i)=0$ for all $i\geqslant0$. Dually, $\cogen^* (T_R)$ denotes the class of all $R$-modules $N_R$ for which
there exists an exact sequence of the form $$0\longrightarrow N \st{f^{-1}}\longrightarrow T^0
\st{f^0}\longrightarrow T^1 \st{f^1}\longrightarrow \cdots$$ with each $T^i\in \add(T)$ and $\Ext^1_R(\Coker f^i,T)=0$ for all
$i\geqslant-1$.

For an $R$-module $M_R$, the symbol $M^{\perp}$ denotes the  subcategory of $R$-modules $N_R$ such that $\Ext^i_R(M,N)=0$
for all $i\geq 1$. Dually, $~^{\perp}M$ denotes the subcategory of $R$-modules $N_ R$ such that $\Ext^i_R(N,M)=0$ for all
$i\geq 1$.

For an $R$-module $M_R$ and a positive integer $n$, the symbol $\Omega^n_{R}M$ denotes the $n$-th syzygy of $M$, with the
convention that $\Omega^0_{R}M=M$. We also recall the definition of cosyzygies. Given an $R$-module $M_R$, and an injective
resolution $$0\rt M\rt E^0\rt E^1\rt\cdots \rt E^{i-1} \to E^i\rt \cdots$$ for $M$, the $n$-th cosyzygy of $M$ is defined as
$\Cosyz nM=\im(E^{n-1}\to E^n)$ for every $n\geq 1$. Additionally, we set $\Cosyz 0M=M$.

The following lemma will be useful in our investigation. Its proof is an immediate consequence of the dimension-shift property
of Ext modules and is therefore omitted.

\begin{lemma}\label{2.1} Let $M_R, N_R$ and $X_R$ be $R$-modules, and $n\in \mathbb{N}$.
\begin{itemize}
\item[(i)] If $0\rt M\rt K_0\rt K_1\rt \cdots \rt K_{n-1}\rt X\rt 0$ is an exact sequence with each $K_j$ in $N^\perp$,
then $\Ext^{i+n}_R(N,M)\cong \Ext^{i}_R(N,X)$ for all $i\geq 1$.
\item[(ii)] If $0\rt X\rt L_0\rt L_1\rt  \cdots \rt L_{n-1}\rt M\rt 0$ is an exact sequence with each $L_j$ in $~^\perp N$,
then $\Ext^{i+n}_R(M,N)\cong \Ext^{i}_R(X,N)$ for all $i\geq 1$.
\end{itemize}
\end{lemma}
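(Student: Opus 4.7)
The plan is a standard dimension-shifting argument: split the given long exact sequence into a chain of short exact sequences, apply $\Hom_R(N,-)$ (respectively $\Hom_R(-,N)$ in (ii)), and exploit the vanishing hypothesis to collapse the resulting long exact sequences of $\Ext$ into degree-shifting isomorphisms.

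For part (i), I would set $Z_0 = M$, $Z_n = X$, and $Z_j = \ker(K_j \to K_{j+1})$ for $1 \leq j \leq n-1$ (with the convention $K_n := X$). Exactness of the original sequence gives rise to $n$ short exact sequences
$$0 \rt Z_j \rt K_j \rt Z_{j+1} \rt 0, \qquad 0 \leq j \leq n-1.$$
Applying $\Hom_R(N,-)$ and using $\Ext^i_R(N, K_j) = 0$ for all $i \geq 1$, the associated long exact sequence of $\Ext$ collapses to isomorphisms $\Ext^i_R(N, Z_{j+1}) \cong \Ext^{i+1}_R(N, Z_j)$ for every $i \geq 1$ and every $0 \leq j \leq n-1$. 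Composing these $n$ isomorphisms along the chain $Z_n, Z_{n-1}, \ldots, Z_0$ yields the desired identity $\Ext^i_R(N, X) \cong \Ext^{i+n}_R(N, M)$ for all $i \geq 1$.

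Part (ii) is formally dual: set $Z'_0 = X$, $Z'_n = M$, and $Z'_j = \im(L_{j-1} \to L_j)$ for $1 \leq j \leq n-1$, producing short exact sequences $0 \rt Z'_j \rt L_j \rt Z'_{j+1} \rt 0$. Applying $\Hom_R(-, N)$ and invoking $L_j \in {}^\perp N$ kills the middle terms, giving $\Ext^i_R(Z'_j, N) \cong \Ext^{i+1}_R(Z'_{j+1}, N)$ for each $i \geq 1$, and iterating over $j = 0, 1, \ldots, n-1$ produces $\Ext^i_R(X, N) \cong \Ext^{i+n}_R(M, N)$.

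There is no real conceptual obstacle here; the only point requiring attention is the bookkeeping of the indices so that the $n$ short exact sequences combine correctly and yield exactly $n$ degree shifts. The hypothesis $K_j \in N^\perp$ (respectively $L_j \in {}^\perp N$) is precisely what is needed to eliminate the middle terms in each long exact sequence of $\Ext$, so once the splitting is set up, the remainder is mechanical.
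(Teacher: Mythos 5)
Your proof is correct and is precisely the standard dimension-shifting argument that the paper implicitly invokes (the lemma is stated there as ``well-known'' with no written proof). The index bookkeeping is handled properly in both parts: the $n$ short exact sequences each contribute one degree shift, and the vanishing hypothesis kills the relevant $\Ext$ terms of the intermediate modules.
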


Recall that an $R$-module $M_R$ is called {\it self-orthogonal} if $\Ext^i_R(M,M)=0$ for all $i\geq 1$. We next recall the
definitions of tilting and Wakamatsu tilting modules.

\begin{definition}\label{2.2} Let $n\in \mathbb{N}_0$. A finitely generated self-orthogonal $R$-module $T_R$ is called
$n$-{\it tilting} if
\begin{itemize}
\item[(i)]  $\pd(T_{R})\leq n$, and
\item[(ii)] $R\in \widecheck{\add(T)}_n$.
\end{itemize}
\end{definition}

\begin{definition}\label{2.3} A self-orthogonal $R$-module $T_R$ is called  {\it Wakamatsu tilting} if
\begin{itemize}
\item[(i)]  $T_R\in \gen^*(R)$, and
\item[(ii)] $R\in \cogen^*(T_R)$.
\end{itemize}
\end{definition}

The definitions of tilting left modules and Wakamatsu tilting left modules are similar. It is straightforward to verify
that every tilting module is also a Wakamatsu tilting module. According to \cite[Corollary 3.2]{W}, Wakamatsu tilting
modules can be characterized as follows:

\begin{lemma}\label{2.4} For a bimodule $_ST_R$, the following are equivalent:
\begin{itemize}
\item[(i)] $T_R$ is a Wakamatsu tilting module with $S\cong \End(T_R)^{\Hop};$
\item[(ii)] $_ST$ is a Wakamatsu tilting module with $R\cong \End(_ST)^{\Hop};$
\item[(iii)] One has
\begin{itemize}
\item[(1)]  $T_R\in \gen^*(R)$ and $_ST\in \gen^*(S)$.
\item[(2)]  $S\cong \End(T_R)^{\Hop}$ and $R\cong \End(_ST)^{\Hop}$.
\item[(3)] The modules $T_R$ and $_ST$ are self-orthogonal.
\end{itemize}
\end{itemize}
\end{lemma}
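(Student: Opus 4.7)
The plan is to prove (i) $\Leftrightarrow$ (iii); the equivalence (ii) $\Leftrightarrow$ (iii) then follows at once by the left-right symmetry of condition (iii).

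For the direction (i) $\Rightarrow$ (iii), I dualize the coresolution witnessing $R \in \cogen^*(T_R)$. Starting from an exact sequence $0 \rt R \rt T^0 \st{f^0}\rt T^1 \st{f^1}\rt \cdots$ with $T^i \in \add(T_R)$ and $\Ext^1_R(\Coker f^i, T)=0$, applying $\Hom_R(-, T)$ produces the complex
$$\cdots \rt \Hom_R(T^1, T) \rt \Hom_R(T^0, T) \rt T \rt 0,$$
whose exactness follows from the self-orthogonality of $T_R$ together with the cokernel vanishings, via Lemma \ref{2.1}(i). Since each $\Hom_R(T^i, T) \in \add({}_SS)$, this furnishes the $\add({}_SS)$-resolution required for ${}_ST \in \gen^*(S)$ (the kernel condition $\Ext^1_S(S, -)=0$ being automatic). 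Applying $\Hom_S(-, T)$ back to this resolution and using the natural evaluation map recovers the original coresolution, and matching the rightmost nonzero terms gives the isomorphism $R \cong \End({}_ST)^{\Hop}$. Self-orthogonality of ${}_ST$ is then obtained by computing $\Ext^i_S(T, T)$ as the cohomology of the dualized resolution, which reduces under the evaluation identification to the cohomology of the coresolution of $R$; the latter vanishes by exactness.

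For (iii) $\Rightarrow$ (i), the only missing ingredient is $R \in \cogen^*(T_R)$. Dualizing an $\add({}_SS)$-resolution of ${}_ST$ via $\Hom_S(-, T)$, the self-orthogonality of ${}_ST$ combined with Lemma \ref{2.1}(ii) yields an exact complex; the identification $\Hom_S(T, T) = \End({}_ST)^{\Hop} \cong R$ as right $R$-modules converts it into the desired coresolution $0 \rt R \rt T^0 \rt T^1 \rt \cdots$ with $T^i \in \add(T_R)$, and the cokernel $\Ext$-vanishing follows by dimension shifting along this dualized sequence.

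The main obstacle is the careful bimodule bookkeeping—in particular, confirming that the canonical map $R \rt \End({}_ST)^{\Hop}$ is an isomorphism of right $R$-modules and that the transfer of self-orthogonality from $T_R$ to ${}_ST$ goes through correctly; this requires iterated applications of Lemma \ref{2.1} along the coresolution of $R$, tracking the $(S,R)$-bimodule structure at each step.
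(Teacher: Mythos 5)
The paper gives no internal proof of this lemma; it is quoted directly from Wakamatsu \cite[Corollary 3.2]{W}, so there is no in-house argument to compare against. Your proposal supplies a direct proof, and the global strategy (dualize the $\cogen^*$-coresolution of $R$ under $\Hom_R(-,T)$, dualize back under $\Hom_S(-,T)$, compare via evaluation maps, and then invoke the manifest $R$--$S$ symmetry of (iii)) is the standard route behind the cited result. The direction (i) $\Rightarrow$ (iii) is sound as you lay it out.

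There is, however, a gap in (iii) $\Rightarrow$ (i). After producing the exact coresolution $0 \rt R \rt T^0 \rt T^1 \rt \cdots$ with each $T^i \in \add(T_R)$, you assert that the cokernel $\Ext$-vanishing ``follows by dimension shifting along this dualized sequence.'' It does not. Dimension shifting via Lemma \ref{2.1}(ii), using $T^j \in {}^\perp T$, along the truncation $0 \rt R \rt T^0 \rt \cdots \rt T^{j-1} \rt \im f^{j-1} \rt 0$ gives $\Ext^{i+j}_R(\im f^{j-1},T)\cong \Ext^i_R(R,T)=0$ for $i\geq 1$, i.e., vanishing only in degrees $\geq j+1$; it never reaches the degree-one groups $\Ext^1_R(\Coker f^i,T)$ that $\cogen^*(T_R)$ requires. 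What closes the argument is the same double-dualization you deploy in the other direction: apply $\Hom_R(-,T)$ to the constructed coresolution. For $Q\in\add({}_SS)$ the evaluation map $Q \rt \Hom_R(\Hom_S(Q,T),T)$ is an isomorphism precisely because $S\cong\End(T_R)^{\Hop}$, so by naturality the resulting complex is isomorphic to the original exact $\add({}_SS)$-resolution $\cdots\rt Q_1\rt Q_0\rt T\rt 0$ of ${}_ST$, hence exact; and exactness of $\Hom_R(-,T)$ applied to the coresolution is, via the long exact sequences of its short exact pieces, exactly the required family of vanishings $\Ext^1_R(\Coker f^i,T)=0$. Notice this is where the half of condition (2) you had not yet used, namely $S\cong\End(T_R)^{\Hop}$, is essential; as written your proof never invokes it in the (iii) $\Rightarrow$ (i) direction, which is a symptom of the gap.
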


There are several definitions of cotilting modules. In this paper, we adopt the definition provided by Miyashita in 1986;
see \cite[page 142]{M}. This notion of cotilting modules is also discussed in \cite{HT, M1, M2}.

\begin{definition}\label{2.5} Let $C_R$ be a finitely generated $R$-module and $S=\End(C_R)$. Assume that the rings $R$ and
$S$ are right Noetherian and left Noetherian; respectively. We say $C_R$ is {\it cotilting} if it is a Wakamatsu tilting
$R$-module and both $\id(C_R)$ and $\id(_SC)$ are finite.
\end{definition}

Left cotilting modules are defined analogously. Below, we present some typical examples of cotilting modules.

\begin{example}\label{2.6}
\begin{itemize}
\item[(i)] For an Artin  algebra $R$  and a tilting left $R$-module $T$, the dual of $T$ is a cotilting right $R$-module.
\item[(ii)] Let $R$ be an Iwanaga–Gorenstein ring. Then $R$, regarded as either a right or a left $R$-module, is cotilting.
\item[(iii)]  Let $\Gamma$ be a Cohen-Macaulay local ring with a dualizing module $\omega$, and $R$ be a $\Gamma$-order
(i.e. $R$ is a $\Gamma$-algebra and is maximal Cohen-Macaulay as a $\Gamma$-module). Then, by \cite[Proposition 2.12]{M2}, $\Hom_\Gamma(R,\omega)$ is a cotilting right $R$-module.
\item[(iv)] Let $C_R$ be a cotilting $R$-module, and let $S=\End(C_R)$. Then, the rings
$\widetilde{R}=\left(\begin{matrix}
{R}& {R}\\
{0}&{R}
\end{matrix}
\right)$  and
$\widetilde{S}=\left(\begin{matrix}
{S}& {S}\\
{0}&{S}
\end{matrix}
\right)$
are right Noetherian and left Noetherian; respectively.  Set $\widetilde{C}=(0\rt C)\oplus (C\rt C)$. By applying the
arguments from Lemmas 3.4 and 3.7 in \cite{Zh} and using Lemma \ref{2.4}, we can conclude that $\widetilde{C}_{\widetilde{R}}$
is a cotilting $\widetilde{R}$-module and $\widetilde{S}\cong \End(\widetilde{C}_{\widetilde{R}})^{\Hop}$.
\end{itemize}
\end{example}

\section{Main Result}

To prove Theorem \ref{1.1}, we require eight lemmas. We begin with the following result, which is essential for its proof.

\begin{lemma}\label{3.1} Let $C_R$ be a cotilting $R$-module with $S=\End(C_R)$. Then there exists a duality of categories:
\begin{displaymath}
\xymatrix{^\perp(C_R)\medcap \rfmod R  \ar@<0.7ex>[rrr]^-{\Hom_R\left(-,C\right)} &
{} & {} & ^\perp(_SC)\medcap \lfmod S.  \ar@<0.7ex>[lll]^-{\Hom_S\left(-,C\right)}}
\end{displaymath}
Furthermore, for any $X,Y\in ~^\perp(C_R)\medcap \rfmod R$ and every $i\geq 0$, we have  $$\Ext^i_{R}(X,Y)\cong\Ext^i_{S}(\Hom_{R}(Y,C),\Hom_{R}(X,C)).$$
\end{lemma}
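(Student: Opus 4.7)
By Lemma~\ref{2.4}, the bimodule $_SC_R$ has both $C_R$ and $_SC$ Wakamatsu tilting and self-orthogonal, with $S\cong\End(C_R)^{\Hop}$ and $R\cong\End(_SC)^{\Hop}$; the cotilting hypothesis further gives $n:=\id(C_R)<\infty$ and $m:=\id(_SC)<\infty$. The crucial technical step is to show that every $X\in{}^\perp(C_R)\cap\rfmod R$ admits a finite coresolution
\[ 0\rt X\rt C^0\rt C^1\rt\cdots\rt C^k\rt 0 \]
with each $C^i\in\add(C_R)$, every image lying in $^\perp(C_R)$, and $k\leq m$. This is constructed from the Wakamatsu-tilting coresolution of $R$ by $\add(C_R)$-modules (available because $R\in\cogen^*(C_R)$), extended via a finite presentation of $X$, then truncated using $\id(_SC)<\infty$ in an Ext-vanishing argument to guarantee the final cokernel stays in $\add(C_R)$.

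With this coresolution in hand, apply the contravariant functor $\Hom_R(-,C)$: since every cokernel lies in $^\perp(C_R)$, each short exact piece remains exact, producing
\[ 0\rt \Hom_R(C^k,C)\rt\cdots\rt \Hom_R(C^0,C)\rt \Hom_R(X,C)\rt 0. \]
Because $\Hom_R(C,C)=S$, each $\Hom_R(C^i,C)$ lies in $\add(_SS)$, so this is a finite projective resolution of $\Hom_R(X,C)$ as a left $S$-module; in particular $\Hom_R(X,C)$ is finitely generated. To compute $\Ext^*_S(\Hom_R(X,C),C)$, apply $\Hom_S(-,C)$ to this resolution and invoke the natural identification $\Hom_S(\Hom_R(C^i,C),C)\cong C^i$ (which rests on $R\cong\End(_SC)^{\Hop}$): the resulting cochain complex is isomorphic to the truncated original coresolution $C^0\rt C^1\rt\cdots\rt C^k\rt 0$, which is acyclic with $H^0=X$. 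This simultaneously delivers $\Ext^i_S(\Hom_R(X,C),C)=0$ for all $i\geq 1$ (so $\Hom_R(X,C)\in{}^\perp(_SC)\cap\lfmod S$) and, by naturality, the fact that the evaluation $\eta_X\colon X\to\Hom_S(\Hom_R(X,C),C)$ is an isomorphism. The symmetric argument with the roles of $R$ and $S$ swapped shows $\Hom_S(-,C)$ is the quasi-inverse, yielding the claimed duality.

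For the Ext isomorphism, fix $X,Y\in{}^\perp(C_R)\cap\rfmod R$. The case $i=0$ follows by combining the duality $Y\cong\Hom_S(\Hom_R(Y,C),C)$ with the standard adjunction $\Hom_R(X,\Hom_S(N,C))\cong\Hom_S(N,\Hom_R(X,C))$ applied to $N=\Hom_R(Y,C)$. For $i\geq 1$, induct on the coresolution length of $Y$: break off $0\rt Y\rt C^0\rt Y'\rt 0$ with $Y'\in{}^\perp(C_R)\cap\rfmod R$ of strictly shorter coresolution length, apply $\Hom_R(-,C)$ to obtain the short exact sequence $0\rt\Hom_R(Y',C)\rt\Hom_R(C^0,C)\rt\Hom_R(Y,C)\rt 0$, and dimension-shift simultaneously in $\Ext^*_R(X,-)$ (using $\Ext^{\geq 1}_R(X,C^0)=0$, since $X\in{}^\perp(C_R)$) and in $\Ext^*_S(-,\Hom_R(X,C))$ (using the $S$-projectivity of $\Hom_R(C^0,C)$). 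The inductive hypothesis on $Y'$, together with the Five Lemma at the base level, closes the argument.

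The principal obstacle is the initial structural step: showing that every module in $^\perp(C_R)\cap\rfmod R$ admits a finite $\add(C_R)$-coresolution with $^\perp(C_R)$-cokernels, bounded in length by $m$. Once this is established, everything that follows is a formal exercise in Hom-adjunction, projective resolutions, and dimension shifting; the interplay between Miyashita's definition of cotilting and the Wakamatsu-tilting machinery of Lemma~\ref{2.4} is exactly what powers this truncation.
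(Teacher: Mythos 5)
The structural step you flag as ``crucial'' is in fact false as stated, and it cannot be repaired in the form you give. You claim that every $X\in {}^{\perp}(C_R)\cap\rfmod R$ admits a \emph{finite} $\add(C_R)$-coresolution of length at most $m=\id(_SC)$, with all images in $^{\perp}(C_R)$. Take $R=k[x]/(x^2)$ and $C=R$ (so $R$ is non-semisimple QF, $S\cong R$, and $C_R$ is cotilting with $n=m=0$); then $^{\perp}(C_R)\cap\rfmod R=\rfmod R$, and your claim would force every finitely generated right $R$-module to lie in $\add(R)$, i.e.\ to be projective, which fails for $X=k$. The obstruction is exactly what Lemma~\ref{3.3a} makes explicit: truncating the $\add(C)$-coresolution at length $\id(C_R)$ works only when $\id(X_R)$ is also finite, which need not hold for an arbitrary $X\in{}^{\perp}(C_R)$. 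Consequently the later induction ``on the coresolution length of $Y$'' used to prove the $\Ext$-isomorphism has no base or well-founded measure in general.

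That said, the surrounding machinery you set up is sound and can be salvaged without finiteness. Wakamatsu's \cite[Proposition 5.6]{W} (which the paper invokes directly, along with \cite[Theorem 4.2 and Proposition 4.5]{W}, as its entire proof) supplies an \emph{infinite} exact sequence $0\rt X\rt C^0\rt C^1\rt\cdots$ with each $C^i\in\add(C_R)$ and all images in $^{\perp}(C_R)$. Applying $\Hom_R(-,C)$ still yields a (possibly infinite) $S$-projective resolution of $\Hom_R(X,C)$; finite generation of $\Hom_R(X,C)$ then follows from $S$ being Noetherian, not from finiteness of the resolution. Your computation of $\Ext^{\geq1}_S(\Hom_R(X,C),C)=0$ and of the double dual via $\Hom_S(\Hom_R(C^i,C),C)\cong C^i$ goes through verbatim with the infinite complex $C^{\bullet}$, whose cohomology is concentrated in degree $0$. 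For the $\Ext$-isomorphism, replace your induction on coresolution length by either (a) induction on the homological degree $i$, dimension-shifting along $0\rt Y\rt C^0\rt Y'\rt 0$ on both sides simultaneously, or (b) the direct observation that $C^{\bullet}$ is $\Hom_R(X,-)$-acyclic (since $X\in{}^{\perp}C$) and $\Hom_S(\Hom_R(C^{\bullet},C),\Hom_R(X,C))\cong\Hom_R(X,C^{\bullet})$ by adjunction and the double-dual isomorphism on $\add(C)$. This is, to the best of my reading, precisely the content of the Wakamatsu results the paper cites; the finite-injective-dimension hypothesis from the cotilting definition enters only to show $\Hom_R(X,C)\in{}^{\perp}(_SC)$ via $\Ext$-vanishing in high degrees, not to bound the length of any coresolution.
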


\begin{proof} It follows by  \cite[Theorem 4.2 and Propositions 5.6 and  4.5]{W}.
\end{proof}

\begin{definition}\label{3.2}  An $R$-module $C_R$ is called {\it product-complete} if $\Prod(C)\subseteq \Add(C)$.
\end{definition}

As mentioned in \cite[Remark 3.5]{MR}, an $R$-module $C_R$ is product-complete if and only if $\Prod(C)=\Add(C)$.
For an $R$-module $C_R$ where the ring $S=\End(C_R)$ is left Noetherian, by \cite[Corollary 4.4]{KS}, $C$ is
product-complete if and only if $_SC$ has finite length.

\begin{lemma}\label{3.3} Let $C_R$ be a self-orthogonal $R$-module and $n\in \mathbb{N}_0$.
\begin{itemize}
\item[(i)] $\widecheck{\add(C)}_n\medcap C^{\perp}=\add(C)=\widehat{\add(C)}_n\medcap ~^{\perp}C$.
\item[(ii)] Moreover, if $C_R$ is product-complete, then $$\widecheck{\Add(C)}_n\medcap C^{\perp}=\Add(C)=
\widehat{\Add(C)}_n\medcap ~^{\perp}C.$$
\end{itemize}
\end{lemma}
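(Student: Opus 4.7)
The plan is to prove each equality by induction on $n$, with the nontrivial direction obtained by splicing the given resolution into short exact sequences and iteratively splitting off a summand. The only input beyond elementary homological algebra is the self-orthogonality of $C$ (and, for part (ii), product-completeness).

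First I would record the easy inclusions. Since $C$ is self-orthogonal, $\add(C) \subseteq C^{\perp} \cap {}^{\perp}C$, and every $M \in \add(C)$ sits in $\widecheck{\add(C)}_n$ and $\widehat{\add(C)}_n$ via the trivial resolutions $0 \to M \to M \to 0 \to \cdots \to 0$ and $0 \to \cdots \to 0 \to M \to M \to 0$. This yields $\add(C) \subseteq \widecheck{\add(C)}_n \cap C^{\perp}$ and $\add(C) \subseteq \widehat{\add(C)}_n \cap {}^{\perp}C$.

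For the reverse inclusion in (i), I would induct on $n$; the case $n=0$ is immediate. Given $M \in \widecheck{\add(C)}_n \cap C^{\perp}$ with coresolution $0 \to M \to C^0 \to \cdots \to C^n \to 0$, split it into $0 \to M \to C^0 \to K \to 0$ and $0 \to K \to C^1 \to \cdots \to C^n \to 0$, so $K \in \widecheck{\add(C)}_{n-1}$. Applying $\Hom_R(C,-)$ to the short exact sequence and using that $M$ and $C^0$ both lie in $C^{\perp}$ shows $K \in C^{\perp}$, and the induction hypothesis forces $K \in \add(C)$. Since $K \in \add(C)$ and $M \in C^{\perp}$, we have $\Ext^1_R(K,M) = 0$, so the short exact sequence splits and $M$ is a direct summand of $C^0 \in \add(C)$. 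The dual equality $\widehat{\add(C)}_n \cap {}^{\perp}C = \add(C)$ is proved symmetrically, applying $\Hom_R(-,C)$ and splitting off the kernel of the surjection $C_0 \to M$ at each stage.

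For part (ii), the same inductive scheme transfers to $\Add$ in place of $\add$, provided I can verify two facts: (a) $\Add(C) \subseteq C^{\perp}$, needed so the $\Hom_R(C,-)$-chase still produces $K \in C^{\perp}$; and (b) every short exact sequence $0 \to K \to C_0 \to M \to 0$ with $K \in \Add(C)$ and $M \in {}^{\perp}C$ splits, needed to conclude $M \in \Add(C)$ in the dual equality. Product-completeness supplies both: using $\Add(C) = \Prod(C)$ and the fact that $\Ext^{i}_R(C,-)$ commutes with products gives $\Prod(C) \subseteq C^{\perp}$, while $\Ext^{1}_R(M,-)$ likewise commutes with products, so $\Ext^{1}_R(M,\Prod(C)) = 0$ whenever $M \in {}^{\perp}C$. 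With (a) and (b) in hand, the induction from (i) carries over verbatim. The only genuine subtlety is pinpointing where product-completeness is indispensable; the rest is bookkeeping.
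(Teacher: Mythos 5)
Your proof is correct and follows essentially the same inductive strategy as the paper's: splice the (co)resolution into a short exact sequence plus a shorter one, run the $\Hom$-chase to put the new end-term in the relevant orthogonal class, apply the induction hypothesis, and split. The only presentational difference is that the paper's proof of part (i) treats $n=1$ as a separate base case and reduces $n>1$ to it, whereas you fold the splitting step into the inductive step directly; these are the same argument. Your treatment of part (ii) is in fact more explicit than the paper, which only remarks that the proof is similar to part (i): you correctly isolate the two places where product-completeness (equivalently $\Add(C)=\Prod(C)$) is genuinely needed, namely to get $\Add(C)\subseteq C^{\perp}$ for the left-hand equality and to split $0\to K\to C_0\to M\to 0$ with $K\in\Add(C)$ and $M\in{}^{\perp}C$ for the right-hand one, while noting that the remaining splits (e.g.\ $\Ext^1_R(K,M)=0$ for $K\in\Add(C)$, $M\in C^{\perp}$) follow without it because $\Ext^1_R(-,M)$ converts the direct sum in $\Add$ into a product. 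This is a useful clarification of a step the paper leaves implicit.
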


\begin{proof} (i) As the proofs of these equalities are similar, we only prove the equality on the left side.
Clearly, $$\add(C)\subseteq \widecheck{\add(C)}_n\medcap C^{\perp}.$$ Assume that $N\in\widecheck{\add(C)}_n\medcap
C^{\perp}$. We will show that $N\in \add(C)$ by induction on $n$. The claim is obvious for $n=0$. If $n=1$, then we have
an exact sequence $$0\rt N\rt C^0\rt C^1 \rt0,$$ where $C^0$ and $C^1$ are in $\add(C)$. Since $N\in C^{\perp}$ and
$C^1\in \add(C)$, this short exact sequence splits, and so $N\in \add(C)$.

Now, assume that $n>1$ and the conclusion holds for $n-1$. We have an exact sequence $$0\rt N\st{f^{-1}}\longrightarrow
C^0\st{f^0}\longrightarrow C^1\rt \cdots \st{f^{n-1}}\longrightarrow C^n \rt 0,$$  with $C^i\in \add(C)$ for all $i=0,
1, \ldots,n$. Put $L=\im(f^0)$. Since $N\in C^{\perp}$, it follows that $L\in C^{\perp}$.  So, $L\in
\widecheck{\add(C)}_{n-1}\medcap C^{\perp}.$ Hence, by the induction hypothesis, $L\in \add(C)$. Consequently,  $N\in
\add(C)$ by the case $n=1$.

(ii) Since $C_R$ is product-complete, we have $\Prod(C)=\Add(C)$. Using this fact together with the self-orthogonality
of $C_R$, it follows readily that $\Ext_R^i(X,Y)=0$ for all $X$ and $Y$ in $\Add(C)$ and every $i\geq 1$. We now proceed
exactly as in part (i).
\end{proof}

\begin{lemma}\label{3.3a} Let $C_R$ be a cotilting $R$-module, $M_R$ a finitely generated $R$-module and $n\in
\mathbb{N}_0$. If $M\in ~^{\perp}C$ and $\id(M_R)\leqslant n$, then $M\in \widecheck{\add(C)}_n$.
\end{lemma}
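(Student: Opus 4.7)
The plan is to construct an $\add(C)$-coresolution of $M$ of length at most $n$ step by step, and show it terminates at step $n$. The key tool is a ``left $\add(C)$-approximation'' for modules in ${}^{\perp}C\bigcap\rfmod{R}$: for any such $X$, I claim there is a short exact sequence
$$0\rt X\rt C^X\rt X'\rt 0$$
with $C^X\in\add(C)$ and $X'\in {}^{\perp}C\bigcap\rfmod{R}$. To construct it, note that the Wakamatsu tilting property (Lemma \ref{2.4}) gives ${}_SC\in\gen^*(S)$, hence ${}_SC$ is finitely generated; combined with $S$ being left Noetherian, this makes $\Hom_R(X,C)$ a finitely generated left $S$-module. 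Choose a projective presentation $0\rt K\rt S^k\rt\Hom_R(X,C)\rt 0$ in $\lfmod{S}$ and apply $\Hom_S(-,C)$. Since $X\in {}^{\perp}C$, the duality of Lemma \ref{3.1} gives $\Hom_R(X,C)\in {}^{\perp}({}_SC)$, so $\Ext^1_S(\Hom_R(X,C),C)=0$ and the resulting sequence $0\rt X\rt C^k\rt\Hom_S(K,C)\rt 0$ is exact. Dimension-shifting against ${}_SC$ forces $K\in {}^{\perp}({}_SC)$, and applying the duality once more gives $X':=\Hom_S(K,C)\in {}^{\perp}C\bigcap\rfmod{R}$.

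Iterating this construction from $M_0:=M$ produces short exact sequences $0\rt M_i\rt C^i\rt M_{i+1}\rt 0$ with $C^i\in\add(C)$ and $M_i\in {}^{\perp}C\bigcap\rfmod{R}$ for every $i\geqslant 0$. Splicing yields
$$0\rt M\rt C^0\rt C^1\rt\cdots\rt C^{n-1}\rt M_n\rt 0,$$
and it remains to show $M_n\in\add(C)$. For any $N\in {}^{\perp}C\bigcap\rfmod{R}$, each $C^j\in\add(C)\subseteq N^{\perp}$, so Lemma \ref{2.1}(i) gives
$$\Ext^{i+n}_R(N,M)\cong \Ext^i_R(N,M_n)\quad\text{for all }i\geqslant 1.$$
Since $\id(M_R)\leqslant n$, the left-hand side vanishes, hence $\Ext^i_R(N,M_n)=0$ for every $N\in {}^{\perp}C\bigcap\rfmod{R}$ and $i\geqslant 1$.

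Finally, apply the approximation construction once more to $M_n$, obtaining $0\rt M_n\rt C^n\rt M_{n+1}\rt 0$ with $M_{n+1}\in {}^{\perp}C\bigcap\rfmod{R}$. The Ext-vanishing just established (applied to $N=M_{n+1}$) gives $\Ext^1_R(M_{n+1},M_n)=0$, so this sequence splits and $M_n$ is a direct summand of $C^n\in\add(C)$. Hence $M_n\in\add(C)$, and the spliced sequence realises $M$ as an element of $\widecheck{\add(C)}_n$. The main obstacle in this plan is ensuring that every relative cosyzygy $M_i$ stays inside ${}^{\perp}C\bigcap\rfmod{R}$, so that the duality continues to apply at each stage; this is what forces the detour through the left $S$-module side via Lemma \ref{3.1} and uses essentially the two-sided nature of the Miyashita-cotilting hypothesis.
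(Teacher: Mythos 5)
Your proof is correct and follows essentially the same route as the paper: both build an $\add(C)$-coresolution of $M$ with cosyzygies in ${}^\perp C$, then use Lemma \ref{2.1} together with $\id(M_R)\leqslant n$ to force the $n$-th cosyzygy to split off as a direct summand of a module in $\add(C)$. The only cosmetic difference is that the paper obtains the coresolution directly from \cite[Proposition 5.6]{W}, whereas you reconstruct it by hand via the duality of Lemma \ref{3.1}; the terminal splitting argument is the same.
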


\begin{proof} By \cite[Propositions 5.6]{W}, there exists an exact sequence $$0\longrightarrow M\st{f^{-1}}\longrightarrow
C^0\st{f^0}\longrightarrow C^1 \st{f^1}\longrightarrow C^2\longrightarrow \cdots $$  where $C^i\in \add(C)$ and $\im(f^i)
\in~^{\perp}C$ for
all $i\geq0$. For simplicity, we put $K_0=M$ and $K_{i+1}=\im(f^i)$ for each $i\geq 0$. It is fairly easy to verify
that $C^i\in (K_{n+1})^{\perp}$ for every $i=0, \dots, n-1$. So, by Lemma \ref{2.1}(i), we have the following isomorphism $$\Ext^1_R(K_{n+1},K_n)\cong \Ext^{n+1}_R(K_{n+1},M)=0.$$ Hence, the short exact sequence $$0\rt K_{n}\rt C^{n}\rt K_{n+1}
\rt 0$$ splits, and so $K_n \in \add(C)$. Consequently, $M\in \widecheck{\add(C)}_n$.
\end{proof}

\begin{lemma}\label{3.4} Let $C_R$ be a cotilting $R$-module with $\id(C_R)=n$. Then every injective right $R$-module
belongs to $\widehat{\Add(C)}_n$.
\end{lemma}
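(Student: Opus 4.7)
The strategy is to construct an $\Add(C)$-resolution of $I$ iteratively, then truncate it at length $n$ using $\id(C_R) = n$.

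For the base step, $I$ injective gives $I \in C^\perp$. Set $A_0 := C^{(\Hom_R(C, I))}$ with the evaluation map $\phi_0 : A_0 \to I$. Surjectivity of $\phi_0$ uses the Wakamatsu tilting embedding $R \hookrightarrow C^{n_0} \in \add(C)$ together with injectivity of $I$: any map $R \to I$, $1 \mapsto i$, extends to $C^{n_0} \to I$, presenting $i$ as a finite sum of images $f_j(c_j)$ with $f_j \in \Hom_R(C, I)$ and $c_j \in C$. The long exact $\Ext$ sequence applied to $0 \to K_1 \to A_0 \to I \to 0$, using self-orthogonality of $C$ and $\Ext^{\geq 1}_R(C, I) = 0$, combined with surjectivity of $\Hom_R(C, A_0) \twoheadrightarrow \Hom_R(C, I)$ by construction, yields $K_1 := \Ker \phi_0 \in C^\perp$. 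Iterating this procedure should produce an infinite $\Add(C)$-resolution $\cdots \to A_1 \to A_0 \to I \to 0$ with kernels $K_j \in C^\perp$.

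For the truncation, set $K_n := \Ker(A_{n-1} \to A_{n-2})$ (with $A_{-1} := I$). Since each $A_i \in \Add(C) \subseteq {}^\perp C$ by self-orthogonality of $C$, Lemma~\ref{2.1}(ii) applied to $0 \to K_n \to A_{n-1} \to \cdots \to A_0 \to I \to 0$ gives
\[
\Ext^i_R(K_n, C) \cong \Ext^{i+n}_R(I, C) = 0 \quad \text{for all } i \geq 1,
\]
using $\id(C_R) = n$. Hence $K_n \in {}^\perp C \cap C^\perp$. A final argument along the lines of Lemma~\ref{3.3} (adapted to the $\Add(C)$-setting) should then force $K_n \in \Add(C)$, certifying $I \in \widehat{\Add(C)}_n$.

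The main obstacle I anticipate is this last step: passing from the double orthogonality $K_n \in {}^\perp C \cap C^\perp$ to the direct summand statement $K_n \in \Add(C)$. Lemma~\ref{3.3}(ii) would suffice if $C$ were product-complete, and Lemma~\ref{3.3}(i) would if the resolution involved only $\add(C)$ (finitely generated); neither applies directly here, so a refined splitting argument exploiting both the resolution's structure and self-orthogonality will be needed. A secondary subtlety is justifying the iteration, since showing $C$ generates an arbitrary $M \in C^\perp$ (so that the evaluation $C^{(\Hom_R(C, M))} \to M$ is surjective) requires further input beyond the injectivity that drove the base step, most naturally through the Wakamatsu coresolution of $R$ together with the Ext-vanishing of $K_j$ against $C$.
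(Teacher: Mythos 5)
Your approach---iterating the evaluation map $C^{(\Hom_R(C,-))}$ and then truncating at stage $n$---does not match the paper's, and the gap you flag at the end is genuine and, as far as I can see, fatal to this route.

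The precise obstruction is the splitting at stage $n$. You want the short exact sequence $0 \to K_{n+1} \to A_n \to K_n \to 0$ to split, i.e., $\Ext^1_R(K_n, K_{n+1}) = 0$. Since $K_{n+1} \in C^\perp$ implies $\Add(C) \subseteq {}^\perp K_{n+1}$ (Ext in the first variable sends direct sums to products), Lemma~\ref{2.1}(ii) applied to $0 \to K_n \to A_{n-1} \to \cdots \to A_0 \to I \to 0$ identifies $\Ext^1_R(K_n, K_{n+1}) \cong \Ext^{n+1}_R(I, K_{n+1})$. This vanishes if and only if $\id((K_{n+1})_R) \leq n$, and a naive iterated evaluation-map resolution gives you no control whatsoever on the injective dimension of the $n$-th kernel. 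Knowing $K_{n+1} \in {}^\perp C \cap C^\perp$ is not enough: these are Ext conditions \emph{against} $C$, not a bound on $\id(K_{n+1})$. Similarly, Lemma~\ref{3.3} (in either version) requires a \emph{finite} $\Add(C)$-resolution or coresolution of $K_n$ plus product-completeness, neither of which your construction supplies.

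The paper's proof is designed precisely to obtain this injective-dimension bound, and it does so by building the resolution on the $S$-side rather than by iterating evaluation maps on the $R$-side. It uses $\id(_SC) = \id(C_R) = n$ (Huang's theorem) together with Enochs--Jenda to conclude $\fd(\Hom_R(C,I)_S) \leq n$, takes a length-$(n+1)$ flat resolution of $\Hom_R(C,I)$ with free middle terms and a flat left end $F_{n+1}$, and tensors with $C$ to obtain $0 \to F_{n+1} \otimes_S C \to C^{(J_n)} \to \cdots \to C^{(J_0)} \to I \to 0$. Now the $(n+1)$-st kernel is \emph{explicitly} $F_{n+1} \otimes_S C$, and Enochs--Jenda's flat base change result yields $\id((F_{n+1} \otimes_S C)_R) \leq \id(C_R) = n$, which is exactly the bound your argument is missing. (The same construction also makes $\Ext^i_R(C^{(J)}, K_{n+1}) = 0$ manifest, so both inputs to Lemma~\ref{2.1} come for free.) In short, the missing idea is to transport the problem to $\End(C_R)$ and exploit the finite flat dimension of $\Hom_R(C,I)$ over $S$; without that, there is no way to cap the injective dimension of the truncation kernel.
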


\begin{proof} Let $S=\End(C_R)$, and let $I_R$ be an injective $R$-module. By \cite[Theorem 2.7]{Hu2}, we have $n=\id(_SC)$.
In view of \cite[Theorem 3.2.13]{EJ}, it is straightforward to see that $$\fd(\Hom_R(C,I)_S)\leq\id(_SC)=n.$$ Thus, we have
an exact sequence
\begin{equation}
0\rt F_{n+1}\rt S^{(J{n})}\rt S^{(J{n-1})}\rt S^{(J_{n-2})}\rt \cdots \rt S^{(J_0)}\rt \Hom_R(C,I)\rt 0, \label{8}
\end{equation}
where $F_{n+1}$ is a flat right $S$-module. As the $R$-module $I_R$ is injective, by the Hom-evaluation isomorphism, one
has $$\Tor_{i\geq 1}^S(\Hom_R(C,I),C)\cong \Hom_R(\Ext_S^{i\geq 1}(C,C),I)=0$$ and $$\Hom_R(C,I)\otimes_SC\cong I.$$ So,
applying the functor $-\otimes_SC$ to \eqref{8} yields the following exact sequence
$$0\longrightarrow F_{n+1}\otimes_SC\longrightarrow C^{(J_{n})}\st{d_{n}}\longrightarrow C^{(J_{n-1})}\st{d_{n-1}}
\longrightarrow C^{(J_{n-2})}\longrightarrow \cdots \longrightarrow C^{(J_0)}\st{d_0}\longrightarrow I\longrightarrow 0.$$
Set $K_{0}=I$ and $K_{j+1}=\Ker ~d_j$ for every $j=0,..., n$. By \cite[Theorem 3.2.15]{EJ}, we conclude that
\begin{equation}
\Ext_R^i(C^{(J)},K_{n+1})\cong \Ext_R^i(C,K_{n+1})^{J}\cong (F_{n+1}\otimes_S\Ext_R^i(C,C))^{J}=0,
\end{equation}
for every set $J$ and all $i\geq 1$.

By \cite[Theorem 3.2.15]{EJ}, we have
\begin{equation}
\id((K_{n+1})_R)=\id((F_{n+1}\otimes_SC)_R)\leq \id(C_R)=n.
\label{9}
\end{equation}
Hence, by applying Lemma \ref{2.1}(ii) to the exact sequence $$0\longrightarrow K_n\longrightarrow  C^{(J_{n-1})}\st{d_{n-1}}
\longrightarrow C^{(J_{n-2})}\longrightarrow \cdots \longrightarrow C^{(J_0)}\st{d_0}\longrightarrow I\longrightarrow 0$$ and
using \eqref{9}, we obtain that $$\Ext_R^1(K_{n}, K_{n+1})\cong \Ext_R^{n+1}(I, K_{n+1})=0.$$
Thus, the short exact sequence $$0\rt K_{n+1}\rt C^{(J_n)}\rt K_{n}\rt 0$$ splits, and so $K_{n}\in \Add(C)$.
\end{proof}

\begin{lemma}\label{3.5} Let $C_R$ be a finitely presented self-orthogonal $R$-module. Then, for any
non-negative integer $n$, the classe $\widehat{{\Add}(C)}_{n}$ is closed under extensions.
\end{lemma}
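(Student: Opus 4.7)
The plan is to run induction on $n$, combining dimension shifting along $\Ext_R(C,-)$ with the classical horseshoe construction. Before launching the induction I would record the preliminary fact that $\widehat{\Add(C)}_n\subseteq C^\perp$: since $C$ is finitely presented and self-orthogonal, the vanishing $\Ext^i_R(C,C)=0$ transfers to $\Ext^i_R(C,C^{(J)})=0$ for every set $J$ and every $i\geq 1$, so $\Add(C)\subseteq C^\perp$; then applying Lemma \ref{2.1}(i) to a resolution $0\rt C_n\rt\cdots\rt C_0\rt M\rt 0$ with each $C_j\in\Add(C)$ gives $\Ext^i_R(C,M)=0$ for all $i\geq 1$.

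The base case $n=0$ is immediate: given an exact sequence $0\rt A\rt B\rt D\rt 0$ with $A,D\in\Add(C)$, view $D$ as a summand of some $C^{(J)}$ so that $\Ext^1_R(D,A)$ is a summand of $\Ext^1_R(C^{(J)},A)\cong \Ext^1_R(C,A)^J$, which vanishes since $A\in\Add(C)\subseteq C^\perp$. The extension therefore splits, so $B\cong A\oplus D\in\Add(C)$. For the inductive step, let $0\rt A\rt B\rt D\rt 0$ be exact with $A,D\in\widehat{\Add(C)}_n$, and pick surjections $C_A\rt A$ and $C_D\rt D$ with $C_A,C_D\in\Add(C)$ whose kernels $K_A,K_D$ lie in $\widehat{\Add(C)}_{n-1}$ (the leading terms of $\Add(C)$-resolutions of $A$ and $D$). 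The preliminary step delivers $\Ext^1_R(C_D,A)=0$, so the map $C_D\rt D$ lifts along $B\rt D$; together with the composite $C_A\rt A\rt B$ this produces a surjection $C_A\oplus C_D\rt B$. The snake lemma then gives a short exact sequence $0\rt K_A\rt K_B\rt K_D\rt 0$, and the induction hypothesis forces $K_B\in\widehat{\Add(C)}_{n-1}$. Prepending a length-$(n-1)$ resolution of $K_B$ to the surjection $C_A\oplus C_D\rt B$ exhibits $B\in\widehat{\Add(C)}_n$.

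I expect the only genuine difficulty to be the preliminary inclusion $\Add(C)\subseteq C^\perp$; this is where finite presentability of $C$ is essential, in order to transfer the $\Ext$-vanishing from $C$ to arbitrary direct sums $C^{(J)}$ (via a finitely generated projective presentation of $C$, so that $\Hom(P_i,-)$ commutes with direct sums at each stage and the corresponding cohomology computes $\Ext^i_R(C,C^{(J)})\cong \Ext^i_R(C,C)^{(J)}=0$). Granted this, both the splitting argument in the base case and the horseshoe-plus-snake argument in the inductive step are completely routine and introduce no further obstacles.
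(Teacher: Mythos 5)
Your proof is correct and follows the same blueprint as the paper: induction on $n$, the base case handled by the splitting argument $\Add(C)\subseteq C^\perp$, and the inductive step reducing to an extension of the two kernels. The only real difference is in how you produce that kernel extension. You invoke the horseshoe construction directly: lift $C_D\rt D$ through $B\rt D$ (possible because $\Ext^1_R(C_D,A)=0$), assemble the resulting surjection $C_A\oplus C_D\rt B$, and read off the exact sequence $0\rt K_A\rt K_B\rt K_D\rt 0$ from the nine lemma. The paper instead builds two successive pull-back diagrams: the first uses $\Ext^1_R(C_0',X)=0$ to split the pull-back of $Y\rt Z$ along $C_0'\rt Z$ into $X\oplus C_0'$, and the second pulls the resulting cover of $Y$ back along the surjection $C_0\oplus C_0'\rt X\oplus C_0'$, exhibiting its kernel $V$ as an extension $0\rt K\rt V\rt L\rt 0$. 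These are two presentations of the same underlying idea; the horseshoe version is arguably slightly slicker because it does in one step what the two pull-backs do in two. The load-bearing ingredient in both is the observation that the $\Ext^1$-vanishing $\Ext^1_R(C,-)=0$ propagates from $C$ to every module admitting a finite $\Add(C)$-resolution, and for that you correctly flag that finite presentability is needed to push $\Ext^1_R(C,C)=0$ through arbitrary direct sums $C^{(J)}$. (One small caution: your preliminary step asserts the stronger inclusion $\widehat{\Add(C)}_n\subseteq C^\perp$, i.e.\ vanishing of $\Ext^i_R(C,-)$ for all $i\geq 1$; getting that for $i\geq 2$ via your ``finitely generated projective presentation'' argument tacitly needs a resolution by finitely generated projectives beyond the first two terms, which is more than finite presentability gives in general. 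But the proof itself only ever uses the $i=1$ case, where finite presentability genuinely suffices; the paper's wording ``$X\in C^\perp$'' has the same latent overstatement, and in all the paper's applications $C$ is finitely generated over a Noetherian ring, so the point is moot there.)
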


\begin{proof} Assume that $0\rt X\rt Y\rt Z\rt 0$ is an exact sequence with $X$ and $Z$ in $\widehat{{\Add}(C)}_{n}$.
We claim  by induction on $n$ that $Y$ in $\widehat{{\Add}(C)}_{n}$. Let $n=0$. Since $C_R$ is a finitely presented
self-orthogonal module and also $X$ and $Z$ are in $\widehat{{\Add}(C)}_0=\Add(C)$, it follows that $\Ext^1_R(Z,X)=0$.
Hence, $Y\cong X\oplus Z$, and so $Y\in \widehat{{\Add}(C)}_0$.

Next, let $n>0$ and assume that the claim holds for $n-1$. Since $X$ and $Z$ are in $\widehat{{\Add}(C)}_{n}$, we have exact
sequences $$0\rt K \rt C_0 \rt X\rt 0$$ and $$0\rt L\rt C_0'\rt Z\rt 0$$ with $C_0$ and $C_0'$ in $\Add(C)$ and $K$ and $L$
in $\widehat{{\Add}(C)}_{n-1}$. Consider the following pull-back diagram:

\[\xymatrix@C-0.5pc@R-.8pc{& &0\ar[d]&0\ar[d] &  &\\& &L\ar[d]\ar[r]^{\text{id}_L} & L\ar[d]& \\
0\ar[r]  & X \ar[r]\ar[d]^{\text{id}_X}  & U \ar[d] \ar[r] & C_0' \ar[d] \ar[r]&0&\\ 0\ar[r]& X\ar[r]  & Y\ar[r]
\ar[d] &Z  \ar[r]\ar[d]& 0. &\\ & & 0&0 &\\}\]
Since $X\in \widehat{{\Add}(C)}_{n}$, and $C$ is finitely presented, we can see that $X\in C^\perp$. As $C_0'\in \Add(C)$,
we have $\Ext^1_R(C_0',X)=0$. So, $U\cong X\oplus C_0'$. We then have the following pull-back diagram:
\[\xymatrix@C-0.5pc@R-.8pc{&0\ar[d]&0\ar[d]& &  &\\
&K\ar[d]\ar[r]^{\text{id}_K}&K\ar[d]&& \\
0\ar[r]  & V \ar[r]\ar[d]  & C_0\oplus C_0'\ar[d] \ar[r] & Y \ar[d]^{\text{id}_Y} \ar[r]&0&\\
0\ar[r]& L\ar[r]\ar[d]   & X\oplus C_0'\ar[r] \ar[d] &Y  \ar[r]& 0. &\\
&0 & 0& &}
\]
Since $K$ and $L$ are in $\widehat{{\Add}(C)}_{n-1}$, by the induction hypothesis, we get $V\in \widehat{{\Add}(C)}_{n-1}$,
and so $Y\in \widehat{{\Add}(C)}_{n}$.
\end{proof}

The subsequent result bears analogy to \cite[Lemma 2.2]{WX}, and the proof follows a similar approach.

\begin{lemma}\label{3.6} Let $C_R$ be a finitely presented self-orthogonal $R$-module, and $m, n\in \mathbb{N}$.
Let $M_R$ be an $R$-module such that there is an exact sequence $$0\rt M \rt N_1 \rt \cdots \rt N_m
\rt Z \rt 0$$ for some $R$-module $Z_R$ and $N_i \in \widehat{\Add(C)}_n $. Then there is an exact sequence
$$0\rt U \rt V \rt M \rt 0$$ for some $U\in \widehat{\Add(C)}_{n-1}$, and some $R$-module $V_R$ such that there
is an exact sequence $$0\rt V \rt C_1 \rt \cdots \rt C_m \rt Z \rt 0$$ with each $C_i \in \Add(C)$.
\end{lemma}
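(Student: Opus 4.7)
The plan is to proceed by induction on $m$, using a pullback to combine $M$ with an inductively produced tail into a single module $W$, and then invoking Lemma \ref{3.5} to resolve $W$ by one new $\Add(C)$-summand.

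For the base case $m=1$, given $0\rt M\rt N_1\rt Z\rt 0$ with $N_1\in \widehat{\Add(C)}_n$, the definition of $\widehat{\Add(C)}_n$ supplies a short exact sequence $0\rt U\rt C_1\rt N_1\rt 0$ with $C_1\in \Add(C)$ and $U\in \widehat{\Add(C)}_{n-1}$. Letting $V$ be the preimage of $M\subseteq N_1$ under the surjection $C_1\twoheadrightarrow N_1$, a direct diagram chase yields both $0\rt U\rt V\rt M\rt 0$ and $0\rt V\rt C_1\rt Z\rt 0$.

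For the inductive step $m\geq 2$, I would split the given sequence at its second term by introducing $L=\im(N_1\to N_2)=\Ker(N_2\to N_3)$ (where $N_{m+1}:=Z$), obtaining
\[0\rt M\rt N_1\rt L\rt 0 \qquad \text{and} \qquad 0\rt L\rt N_2\rt N_3\rt\cdots\rt N_m\rt Z\rt 0.\]
Applying the inductive hypothesis to the second sequence (of length $m-1$) yields $U_0\in \widehat{\Add(C)}_{n-1}$, an $R$-module $V_0$, and exact sequences $0\rt U_0\rt V_0\rt L\rt 0$ and $0\rt V_0\rt C_2\rt\cdots\rt C_m\rt Z\rt 0$ with each $C_i\in \Add(C)$. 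Next, form the pullback $W=N_1\times_L V_0$; this gives
\[0\rt M\rt W\rt V_0\rt 0 \qquad \text{and} \qquad 0\rt U_0\rt W\rt N_1\rt 0.\]
Since both $N_1$ and $U_0$ lie in $\widehat{\Add(C)}_n$ and this class is closed under extensions by Lemma \ref{3.5}, the second sequence shows $W\in \widehat{\Add(C)}_n$. Pick a short exact sequence $0\rt U\rt C_1\rt W\rt 0$ with $C_1\in \Add(C)$ and $U\in \widehat{\Add(C)}_{n-1}$, and let $V$ be the kernel of the composite $C_1\twoheadrightarrow W\twoheadrightarrow V_0$, equivalently the preimage of $M\subseteq W$ in $C_1$. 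A direct diagram chase then produces $0\rt U\rt V\rt M\rt 0$ and $0\rt V\rt C_1\rt V_0\rt 0$, and splicing the latter with the tail $0\rt V_0\rt C_2\rt\cdots\rt C_m\rt Z\rt 0$ yields the required resolution.

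The only nontrivial step is the construction of $W$ via pullback and the verification that $W$ still lies in $\widehat{\Add(C)}_n$; this is exactly where Lemma \ref{3.5} enters, and it is what allows the newly chosen one-step $\Add(C)$-resolution of $W$ to simultaneously furnish the new $C_1$-term of the target complex and a kernel $U$ inside $\widehat{\Add(C)}_{n-1}$. Once Lemma \ref{3.5} is invoked, everything else reduces to routine diagram chasing with pullbacks.
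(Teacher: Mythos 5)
Your proof is correct and follows essentially the same route as the paper's: the same induction on $m$, the same decomposition of the long exact sequence at $L=\Coker(M\to N_1)$, the same pullback $W=N_1\times_L V_0$ followed by an appeal to Lemma \ref{3.5} to place $W$ in $\widehat{\Add(C)}_n$, and the same final pullback producing $V$ as the preimage of $M$ in $C_1$. Your $W$, $U_0$, $V_0$, $L$, and $C_1$ match the paper's $Y$, $U'$, $V'$, $M'$, and $X$ respectively.
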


\begin{proof}  We prove the statement by induction on $m$. In case $m=1$, we have an exact sequence $0\rt M \rt N_1 \rt Z
\rt 0$ with  $N_1\in \widehat{\Add(C)}_n$. Since $N_1 \in \widehat{\Add(C)}_n$, there is an exact sequence $0\rt U
\rt C_1 \rt N_1 \rt 0$ with $C_1\in \Add(C)$ and $U\in \widehat{\Add(C)}_{n-1}$. Then, we can construct the following
pull-back diagram:
\[\xymatrix@C-0.5pc@R-.8pc{&0\ar[d]&0\ar[d]& &  &\\
& U\ar[d]\ar[r]^{\text{id}_{U}}&U\ar[d] && \\
0\ar[r]  & V \ar[r]\ar[d]  & C_1 \ar[d] \ar[r] & Z \ar[d]^{\text{id}_Z} \ar[r]&0&\\
0\ar[r]& M\ar[r]\ar[d]   & N_1\ar[r] \ar[d] &Z  \ar[r]& 0. &\\ &0 & 0& &}
\]
Clearly, the left column is just the desired exact sequence.

Next, assume that $m>1$ and the conclusion holds for $m-1$. Let $M'$ denote the right $R$-module $\Coker(M\rt N_1)$.
Then, by the induction assumption, there is an exact sequence $0\rt U' \rt V' \rt  M'\rt 0$ for some
$U'\in \widehat{\Add(C)}_{n-1}$ and some $V'$ such that there is an exact sequence $$0\rt V' \rt C'_1 \rt \cdots
\rt C'_{m-1} \rt Z\rt 0$$ with each $C'_i\in \Add(C)$. Now, we can construct the following pull-back diagram:
\[\xymatrix@C-0.5pc@R-.8pc{& &0\ar[d]&0\ar[d] &  &\\& &M\ar[d]\ar[r]^{\text{id}_M} &M\ar[d]& \\
0\ar[r] & U' \ar[r]\ar[d]^{\text{id}_{U'}} & Y \ar[d] \ar[r] & N_1 \ar[d] \ar[r]&0&\\
0\ar[r]& U'\ar[r]  & V'\ar[r] \ar[d] &M'  \ar[r]\ar[d]& 0. &\\ & & 0&0 &}
\]
Since $N_1$ and $U'$ are in $\widehat{\Add(C)}_{n}$, Lemma \ref{3.5} yields that $Y\in \widehat{\Add(C)}_{n}$. So,
there is an exact sequence  $0\rt U\rt X\rt  Y\rt 0$ with $X\in \Add(C)$ and $U\in \widehat{\Add(C)}_{n-1}$.
We then have the following pull-back diagram:
\[\xymatrix@C-0.5pc@R-.8pc{&&0\ar[d]&0 \ar[d] &  &&\\
0\ar[r]& U\ar[r] \ar[d]^{\text{id}_{U}} & V \ar[d]\ar[r] & M  \ar[d]\ar[r]&0& \\
0\ar[r]  & U \ar[r]  & X\ar[d] \ar[r] & Y\ar[d] \ar[r]&0&\\
&   & V'\ar[r]^{\text{id}_{V'}} \ar[d] &V'  \ar[d]&\\ & & 0&0 &}
\]
It is easy to see that the top row is just the desired sequence.
\end{proof}

\begin{corollary}\label{3.7}  Let $n\in \mathbb{N}$ and $C_R$ be a cotilting $R$-module with $\id(C_R)=n$. For any
$R$-module $M_R$, there exists an exact sequence $0\rt U\rt V \rt M\rt0$ such that $U\in
\widehat{{\Add}(C)}_{n-1}$ and $V$ fits into an exact sequence $$0\rt V \rt C_{0} \rt C_1\rt \cdots \rt C_{n-1}
\rt \Cosyz nM \rt0,$$ where each $C_i\in \Add(C)$.
\end{corollary}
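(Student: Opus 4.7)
The plan is a short composition of the preceding lemmas, using the injective resolution of $M$ as the input for Lemma \ref{3.6}.

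First, since $\text{Mod-}R$ has enough injectives, choose an injective resolution of $M$ and truncate it to obtain an exact sequence
$$0\rt M\rt E^0\rt E^1\rt \cdots \rt E^{n-1}\rt \Cosyz{n}{M}\rt 0,$$
where each $E^i$ is an injective right $R$-module. By hypothesis $\id(C_R)=n$, so Lemma \ref{3.4} applies and gives $E^i\in\widehat{\Add(C)}_n$ for every $i=0,1,\ldots,n-1$.

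Second, I verify that the hypotheses of Lemma \ref{3.6} hold for $C$. As $C_R$ is cotilting, Definition \ref{2.5} ensures $C_R$ is finitely generated, $R$ is right Noetherian (so $C_R$ is in fact finitely presented), and $C_R$ is Wakamatsu tilting, hence self-orthogonal.

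Third, apply Lemma \ref{3.6} with $m=n$, $Z=\Cosyz{n}{M}$, and $N_i=E^{i-1}$ for $i=1,\dots,n$. The lemma produces a module $U\in\widehat{\Add(C)}_{n-1}$ and a module $V$ fitting into two exact sequences
$$0\rt U\rt V\rt M\rt 0 \quad \text{and} \quad 0\rt V\rt C_1\rt C_2\rt \cdots \rt C_n\rt \Cosyz{n}{M}\rt 0,$$
with each $C_i\in\Add(C)$. Re-indexing $C_i$ as $C_{i-1}$ yields precisely the statement of the corollary.

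There is essentially no obstacle here: the argument is a direct application of Lemmas \ref{3.4} and \ref{3.6}, and the only thing to be cautious about is confirming that the standing hypotheses (finite presentedness and self-orthogonality of $C$) carry over from Definition \ref{2.5}, which they do immediately.
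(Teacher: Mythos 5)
Your proof is correct and follows exactly the same route as the paper: truncate an injective resolution of $M$ at the $n$-th cosyzygy, invoke Lemma \ref{3.4} to place each $E^i$ in $\widehat{\Add(C)}_n$, and then feed this into Lemma \ref{3.6} with $m=n$ and $Z=\Cosyz nM$. The extra remarks you make about why $C_R$ is finitely presented (from $R$ right Noetherian and $C_R$ finitely generated) and self-orthogonal (from the Wakamatsu tilting condition) are correct and helpful but were left implicit in the paper's one-line argument.
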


\begin{proof} We consider an exact sequence $$0\rt M \rt E^0\rt\cdots\rt E^{n-1}\rt\Cosyz nM\rt0 $$ in which
each $E^i$ is an injective right $R$-module. Lemma \ref{3.4} yields that $E^i \in \widehat{\Add(C)}_n$. Now, the
conclusion follows by Lemma \ref{3.6}.
\end{proof}

\begin{lemma}\label{3.8} Let $C_R$ be a cotilting $R$-module with $\id(C_R)=n$, and $M_R$ an $R$-module.  If $M_R\in
C^{\perp}$, then $\Cosyz nM\in I^{\perp}$ for every injective $R$-module $I_R$.
\end{lemma}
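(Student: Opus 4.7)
The plan is to reduce the target vanishing $\Ext^i_R(I,\Cosyz nM)=0$ (for every injective right $R$-module $I_R$ and every $i\geq 1$) to an $\Ext$-vanishing between $\Add(C)$ and $M$, via two consecutive applications of the dimension-shifting Lemma~\ref{2.1}.

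First I would start from an injective resolution
\[0\to M\to E^0\to E^1\to\cdots\to E^{n-1}\to \Cosyz nM\to 0.\]
Since each $E^j$ is injective, $\Ext^i_R(-,E^j)=0$ for every $i\geq 1$, and in particular $E^j\in I^{\perp}$ for any injective $I_R$. Hence Lemma~\ref{2.1}(i), applied with $N=I$, $K_j=E^j$ and $X=\Cosyz nM$, yields
\[\Ext^{i+n}_R(I,M)\cong \Ext^i_R(I,\Cosyz nM)\qquad\text{for all } i\geq 1,\]
so it suffices to prove $\Ext^j_R(I,M)=0$ for every $j\geq n+1$.

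Next I would invoke Lemma~\ref{3.4}: since $\id(C_R)=n$, every injective right $R$-module $I_R$ lies in $\widehat{\Add(C)}_n$, giving an exact sequence
\[0\to C_n\to C_{n-1}\to \cdots\to C_0\to I\to 0\]
with each $C_j\in \Add(C)$. Because $\Ext^i_R(-,M)$ converts coproducts in the first argument into products, the hypothesis $M\in C^{\perp}$ forces every object of $\Add(C)$ to lie in $^{\perp}M$. Applying Lemma~\ref{2.1}(ii) to the resolution above with $N=M$ and $X=C_n$ therefore gives
\[\Ext^{i+n}_R(I,M)\cong \Ext^i_R(C_n,M)=0\qquad\text{for all } i\geq 1.\]
Combined with the isomorphism from the previous paragraph, this yields $\Ext^i_R(I,\Cosyz nM)=0$ for all $i\geq 1$; that is, $\Cosyz nM\in I^{\perp}$.

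The argument is essentially formal and I do not anticipate a substantial obstacle. The one point worth highlighting is the length matching: both the injective coresolution of $M$ stopping at $\Cosyz nM$ and the $\Add(C)$-resolution of $I$ provided by Lemma~\ref{3.4} have length exactly $n$, and it is precisely this coincidence that lets the two dimension shifts compose and reduce $\Ext^i_R(I,\Cosyz nM)$ for $i\geq 1$ all the way down to $\Ext^{i}_R(C_n,M)$ with $i\geq 1$, which vanishes because $C_n\in\Add(C)$ and $M\in C^{\perp}$.
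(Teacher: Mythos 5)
Your proof is correct and follows essentially the same route as the paper's: both use Lemma~\ref{2.1}(i) on the injective coresolution of $M$ of length $n$, then Lemma~\ref{3.4} to resolve $I$ by $\Add(C)$ in length $n$, and finally Lemma~\ref{2.1}(ii) together with $\Add(C)\subseteq{}^{\perp}M$ to conclude. Your remarks on why $M\in C^{\perp}$ propagates to $\Add(C)\subseteq{}^{\perp}M$ and on the length matching are accurate but are only making explicit what the paper leaves implicit.
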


\begin{proof} Let $I_R$ be an injective $R$-module. By Lemma \ref{2.1}(i), we have $$\Ext^{i}_R(I,\Cosyz nM)\cong
\Ext^{i+n}_R(I,M)$$ for all $i\geq1$.  By Lemma \ref{3.4}, we can consider an exact sequence $$0\rt C_n\rt\cdots\rt
C_1\rt C_0 \rt I\rt 0$$ with $C_i\in \Add(C)$.  It is easy to see that $\Add(C)\subseteq ~^\perp M$. So, by Lemma
\ref{2.1}(ii), we obtain $$\Ext^{i+n}_R(I,M)\cong\Ext^i_R(C_n,M)=0$$ for all $i\geq1$.   Hence,
$\Ext^{i}_R(I,\Cosyz nM)=0$  for all $i\geq1$.
\end{proof}

\begin{lemma}\label{3.9}  Let $C_R$ be a cotilting $R$-module with $\id(C_R)=n$, and let $S=\End(C_R)$. Assume that
$_SC$ is Artinian. Then, for any self-orthogonal $R$-module $M_R$ in $C^{\perp}\medcap ~^{\perp}C$, the right $R$-module
$\Cosyz nM$ is also self-orthogonal.
\end{lemma}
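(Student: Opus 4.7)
The plan is to show $\Ext^i_R(\Cosyz nM, \Cosyz nM)=0$ for every $i\ge 1$ by parlaying two dimension-shift arguments into an isomorphism $\Ext^i_R(\Cosyz nM,\Cosyz nM)\cong \Ext^i_R(U,M)$ for an auxiliary module $U$ which we then prove lies in $\Add(C)$. The hypothesis that $_SC$ is Artinian enters at exactly one spot: combined with the fact that the Wakamatsu-tilting property forces $_SC$ to be finitely generated over the left-Noetherian ring $S$ (so $_SC$ has finite length), it makes $C$ product-complete via the remark preceding Lemma~\ref{3.3}, which is the prerequisite for invoking Lemma~\ref{3.3}(ii).

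First I would apply Corollary~\ref{3.7} to $M$ to obtain an exact sequence $0\to U\to V\to M\to 0$ with $U\in\widehat{\Add(C)}_{n-1}$, where $V$ fits in a sequence $0\to V\to C_0\to\cdots\to C_{n-1}\to\Cosyz nM\to 0$ with each $C_j\in\Add(C)$. From the truncated injective resolution $0\to M\to E^0\to\cdots\to E^{n-1}\to\Cosyz nM\to 0$, injectivity of the $E^j$ places them in $(\Cosyz nM)^{\perp}$, so Lemma~\ref{2.1}(i) yields $\Ext^{i+n}_R(\Cosyz nM,M)\cong\Ext^i_R(\Cosyz nM,\Cosyz nM)$ for $i\ge 1$. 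On the other side, since $M\in C^{\perp}$, each $C_j$ lies in ${}^{\perp}M$, so Lemma~\ref{2.1}(ii) applied to the above $\Add(C)$-sequence of length $n$ gives $\Ext^{i+n}_R(\Cosyz nM,M)\cong\Ext^i_R(V,M)$ for $i\ge 1$. Feeding the long exact sequence of $\Hom_R(-,M)$ attached to $0\to U\to V\to M\to 0$ with the self-orthogonality $\Ext^i_R(M,M)=0$ produces $\Ext^i_R(V,M)\cong\Ext^i_R(U,M)$ for $i\ge 1$, so chaining the identifications gives $\Ext^i_R(\Cosyz nM,\Cosyz nM)\cong\Ext^i_R(U,M)$.

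The crux, and the step I expect to be the main obstacle, is to show $U\in\Add(C)$. By Lemma~\ref{3.3}(ii), $\widehat{\Add(C)}_n\cap {}^{\perp}C=\Add(C)$, and since $U\in\widehat{\Add(C)}_{n-1}\subseteq\widehat{\Add(C)}_n$ it suffices to verify $U\in {}^{\perp}C$. For this, apply Lemma~\ref{2.1}(ii) to the long $\Add(C)$-sequence above with $N=C$: self-orthogonality of $C$ places $\Add(C)\subseteq {}^{\perp}C$, and $\id(C_R)=n$ forces $\Ext^{i+n}_R(\Cosyz nM,C)=0$ for $i\ge 1$. Hence $\Ext^i_R(V,C)=0$, so $V\in {}^{\perp}C$. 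Then the long exact sequence of $\Hom_R(-,C)$ on $0\to U\to V\to M\to 0$, together with $M\in {}^{\perp}C$, transfers the vanishing to $U$: $\Ext^i_R(U,C)\cong\Ext^i_R(V,C)=0$, so $U\in {}^{\perp}C$, and Lemma~\ref{3.3}(ii) yields $U\in\Add(C)$.

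To conclude, since $U\in\Add(C)$ and $M\in C^{\perp}$, one has $\Ext^i_R(U,M)=0$ for every $i\ge 1$ (direct sums of copies of $C$ have vanishing $\Ext$ into $M$, and $U$ is a summand of such). Combined with the identification $\Ext^i_R(\Cosyz nM,\Cosyz nM)\cong\Ext^i_R(U,M)$ established above, this proves $\Cosyz nM$ is self-orthogonal.
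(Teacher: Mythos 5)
Your proof is correct and follows essentially the same route as the paper's: apply Corollary~\ref{3.7} to produce $0\to U\to V\to M\to 0$, dimension-shift via Lemma~\ref{2.1} to reduce to $\Ext^i_R(U,M)$, then show $V\in{}^{\perp}C$ hence $U\in{}^{\perp}C$, and invoke product-completeness (from $_SC$ Artinian plus \cite[Corollary 4.4]{KS}) together with Lemma~\ref{3.3}(ii) to land $U\in\Add(C)$. The only cosmetic difference is that you embed $\widehat{\Add(C)}_{n-1}$ into $\widehat{\Add(C)}_n$ before applying Lemma~\ref{3.3}(ii), whereas the paper applies it at index $n-1$ directly; this is immaterial since the lemma holds for every nonnegative integer.
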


\begin{proof} Since $\Cosyz 0M=M$, we may and do assume that $n\geq 1$. By Lemma \ref{2.1}(i), we have
\begin{equation}
\Ext^i_{R}(\Cosyz nM,\Cosyz nM)\cong\Ext^{i+n}_{R}(\Cosyz nM,M)
\label{01}
\end{equation}
for all $i\geq 1$.

According to Corollary \ref{3.7}(a), there exists an exact sequence $0\rt U\rt V \rt M\rt 0~~~~~~     (\dagger)$ such
that $U\in \widehat{\Add(C)}_{n-1}$ and $V$ satisfies an exact sequence $$0\rt V \rt C_{0} \rt C_1\rt \cdots
\rt C_{n-1} \rt \Cosyz nM \rt0~~~~~~~~(\ddagger)$$ with $C_i\in \Add(C)$. Given that $M_R\in C^{\perp}$,
it follows that $\Add(C)\subseteq ~^\perp M$.  Applying Lemma \ref{2.1}(ii) to the exact sequence
$(\ddagger)$ give us
\begin{equation}
\Ext_R^{i+n}(\Cosyz nM,M)\cong \Ext_R^i(V,M)
\label{11}
\end{equation}
for all $i\geq 1$.

Since $M$ is self-orthogonal, applying the functor $\Hom_R(-,M)$ to $(\dagger)$ leads to the following isomorphism
\begin{equation}
\Ext^{i}_R(V,M)\cong \Ext^{i}_R(U,M)
\label{12}
\end{equation}
for all $i\geq 1$.

Because $C_R$ is self-orthogonal, it follows that $\Add(C)\subseteq ~^\perp C$. Hence, applying Lemma \ref{2.1}(ii)
to the exact sequence $(\ddagger)$, and noting that $\id(C_R)=n$, we deduce that $$\Ext_R^i(V,C)\cong
\Ext_R^{i+n}(\Cosyz nM,C)=0$$ for all $i\geq 1$. Thus, $V\in~^{\perp}C$. Since $M \in ~^{\perp}C$, the exact sequence
$(\dagger)$ yields that $U\in ~^{\perp}C$. On the other hand,  $U\in \widehat{\Add(C)}_{n-1}$. Since $_SC$ is
Artinian, by \cite[Corollary 4.4]{KS}, the $R$-module $C_R$ is product-complete. Hence, by Lemma \ref{3.3}(ii),
$U\in \Add(C)$. Consequently, $\Ext^i_R(U,M)=0$ for all $i\geq 1$. Therefore, by \eqref{01}, \eqref{11} and
\eqref{12}, we conclude that $\Ext^i_{R}(\Cosyz nM,\Cosyz nM)=0$ for all $i\geq 1$.
\end{proof}

We recall the following definition from \cite{J}. See also \cite{S}.

\begin{definition}\label{3.10} A ring $R$ is said to be {\it right co-Noetherian} if the injective hull
of every simple right $R$-module is Artinian.
\end{definition}

There is a rich source of right co-Noetherian rings as illustrated by the following example:

\begin{example}\label{3.11}
\begin{itemize}
\item[(i)] Matlis \cite{Mat} proved that the injective hull of every simple module over a commutative Noetherian ring is
Artinian, thus every commutative Noetherian ring is right co-Noetherian.
\item[(ii)] By \cite[Corollary 2.3]{Hi}, any Noetherian algebra over a commutative Noetherian ring is a right co-Noetherian
ring. In particular, the group ring $\mathbb{Z}[G]$ of a finite group $G$ is right co-Noetherian.
\item[(iii)] A ring $R$ is called right V-ring if every simple right $R$-module is injective. Clearly, every right V-ring is
right co-Noetherian.
\item[(iv)] Kaplansky's well-known criterion for commutative von Neumann regular rings implies that a commutative ring is a
V-ring if and only if it is von Neumann regular. Consequently, any infinite product of fields is a commutative right
co-Noetherian ring that is not Noetherian.
\item[(v)] By \cite[Theorem 2.5]{MV}, the property of being a right V-ring is Morita invariant. Hence, any matrix ring over
a right V-ring is again a right V-ring, yielding numerous examples of non-commutative right co-Noetherian rings.
\item[(vi)] A right Noetherian ring is not necessarily right co-Noetherian. Indeed, there exists a left and right Artinian
ring that is not right co-Noetherian; see \cite[Exercise 24.9, p. 286]{AF}.
\end{itemize}
\end{example}

\begin{theorem}\label{3.12} Let $R$ be a right Artinian ring, let $C_R$ be a cotilting $R$-module, and set $S=\End(C_R)$.
\begin{itemize}
\item[(1)] If $S$ satisfies (ARC), then $R^{\Hop}$ satisfies (DARC).
\item[(2)]  The converse holds under the additional assumptions that $R$ is right co-Noetherian and $S$ is left Artinian.
\end{itemize}
\end{theorem}

\begin{proof}
(1) Assume that $S$ satisfies (ARC). We prove that $R^{\op}$ satisfies (DARC). Let $M_R$ be
an Artinian $R$-module such that $\Ext_R^i(M \oplus I,M)=0$ for every injective $R$-module $I_R$ and all $i>0$. We show
that $M_R$ is injective.

Because $R$ is right Artinian and $M_R$ is Artinian, \cite[Corollary 2.3.24]{EJ} implies that $M_R$ is finitely generated.

Let $\id(_SC)=n$. By \cite[Theorem 2.7]{Hu2}, we have $n=\id(C_R)$. By \cite[Theorem 1.2]{Hu1}, there exists an exact
sequence
\begin{equation}
0 \rt U \rt V \rt M \rt 0  \label{111123}
\end{equation}
with $V \in ~^{\perp}C \bigcap \text{mod-}R$ and $U \in \widehat{\add(C)}_{n-1}$.

Since $\id(C_R)<\infty$ and $U \in \widehat{\add(C)}_{n-1}$, we have $\id(U_R)<\infty$. From $M \in I^{\perp}$ for
all injective $I_R$, it follows that $M \in X^{\perp}$ for every $R$-module $X_R$ of finite injective dimension.
Hence, $M \in C^{\perp} \bigcap \text{mod-}R$ and $M \in U^{\perp} \bigcap \text{mod-}R$. Since $U \in
\widehat{\add(C)}_{n-1}$, we also have $U \in C^{\perp} \bigcap \text{mod-}R$, so $V \in C^{\perp} \bigcap
\text{mod-}R$ by \eqref{111123}.

Applying $\Hom(-,M)$ to \eqref{111123} yields $$\cdots \rt \Ext^i_{R}(M,M) \rt \Ext^i_{R}(V,M) \rt \Ext^i_{R}(U,M)
\rt \cdots.$$ As $M_R$ is self-orthogonal, we get $\Ext^i_{R}(V,M)=0$ for all $i>0$. Since $V \in ~^{\perp}C
\bigcap \text{mod-}R$ and $U \in \widehat{\add(C)}_{n-1}$, we also have $\Ext^i_{R}(V,U)=0$ for all $i>0$.
Applying $\Hom_R(V,-)$ to \eqref{111123} gives $\Ext^i_{R}(V,V)=0$ for all $i>0$, hence $V \in ~(V \oplus C)^{\perp}
\bigcap ~^{\perp}C \bigcap \text{mod-}R$.

Set $N=\Hom_R(V,C)$. Lemma \ref{3.1} gives
\[\begin{array}{lllll}
 \Ext^i_{S}(N,N \oplus S) & = \Ext^i_{S}(\Hom_{R}(V,C), \Hom_{R}(V,C) \oplus \Hom_{R}(C,C)) \\
 & \cong \Ext^i_{R}(V \oplus C, V) \\
 & = 0
\end{array}\]
for all $i>0$. By assumption, $N$ is finitely generated projective over $S$, and applying Lemma \ref{3.1} again
gives $V \in \add(C)$. Then, from \eqref{111123}, we have $M \in \widehat{\add(C)}_{n}$, so $\id(M_R) < \infty$.
Consider an exact sequence $0\rt M\rt I\rt X\rt 0$ of right $R$-modules with $I$ injective and $\id(X_R)<\infty$.
Since $\Ext_R^1(X,M)=0$, applying the functor $\Hom_R(X,-)$ to this sequence shows that it splits, and consequently
$M_R$ is injective. Therefore, $R^{\op}$ satisfies (DARC).

(2) Assume that $R^{\Hop}$ satisfies (DARC).  We aim to show that $S$ satisfies (ARC).
To this end, let $N\in \lfmod S$ be such that $\Ext^{i\geq 1}_{S}(N,N\oplus S)=0$. As the ring $S$ is left
Noetherian and $_SN$ is finitely generated, we may choose syzygies of $_SN$ to be finitely generated. By Lemma
\ref{2.1}, for each $i\geq 1$, we have
\[\begin{array}{lllll}
\Ext^{i}_{S}(\Omega^n_{S}N,\Omega^n_{S}N)& \cong \Ext^{i+n}_{S}(N,\Omega^n_{S}N)\\
& \cong \Ext^{i}_{S}(N,N)\\
& =0,
\end{array}
\]
and $$\Ext^{i}_{S}(\Omega^n_{S}N,S)\cong \Ext^{i+n}_{S}(N,S)=0.$$ Hence, $\Ext^{i\geq 1}_{S}(\Omega^n_{S}N,
\Omega^n_{S}N\oplus S)=0.$ On the other hand, we know that $\id(_SC)=n$, which
implies $\Omega^n_{S}N\in ~^\perp(_SC)\medcap \lfmod S$. Thus, by Lemma \ref{3.1}, we can express $\Omega^n_{S}N\cong
\Hom_{R}(M,C)$ for some $M\in ~^{\perp}(C_R)\medcap \rfmod R$. Since $\Hom_{R}(C,C)=S$ and
$M\oplus C \in ~^{\perp}(C_R)\medcap \rfmod R$,  by Lemma \ref{3.1}, we have the following isomorphisms for every $i\geq 1$:
\begin{equation}
\begin{array}{lllll}
\Ext^{i}_{R}(M\oplus C,M)& \cong \Ext^i_{S}(\Hom_{R}(M,C),\Hom_{R}(M,C)\oplus \Hom_{R}(C,C))\\ & \cong \Ext^{i}_{S}(\Omega^n_{S}N,\Omega^n_{S}N\oplus S)\\
& =0.
\end{array}
\label{111}
\end{equation}
In particular, we see that $M_R$ is self-orthogonal and $M\in C^{\perp}\medcap ^{\perp}C$. From Lemmas \ref{3.8} and
\ref{3.9}, we obtain that
\begin{equation}
\Ext^{i}_R(\Cosyz nM\oplus I,\Cosyz nM)=0 \label{11112}
\end{equation}
for every injective $R$-module $I_R$ and for all $i\geq 1$.

As the ring $R$ is right Artinian, the finitely generated $R$-module $M_R$ is Artinian. Since the $R$-module $M_R$ is
Artinian and $R$ is right co-Noetherian, by applying \cite[Theorem 3.21]{SV}, we can choose $\Cosyz nM$ to be an Artinian
right $R$-module. As $R^{\Hop}$ satisfies (DARC), by \eqref{11112}, we induce that $\Cosyz nM$ is injective, leading
to $\id(M_R)\leq n$. So, we obtain from Lemma \ref{3.3a} that $M\in \widecheck{\add(C)}_n$. As $M\in (C_R)^{\perp}$,
Lemma \ref{3.3}(i) implies that $M\in \add(C)$. So, $\Omega^n_{S}N\cong \Hom_{R}(M,C)$ is a projective left $S$-module.
Consequently, $\pd(_SN)<\infty$. Since $N\in ~^{\perp}{S}$ too, it is easy to see that $_SN$ is a projective $S$-module.
Therefore, $S$ satisfies (ARC).
\end{proof}

\begin{remark}\label{3.12a}
It is well known that (ARC) holds for complete intersection local rings. Moreover, by \cite[Theorem 3]{Ar} and 
\cite[Theorem 4.5(1)]{CT}, the conjecture holds for all Gorenstein local rings if and
only if it holds for every Artinian Gorenstein local ring. Consequently, among commutative Noetherian rings,
Artinian Gorenstein local rings constitute a natural and fundamental class for studying (ARC). Since an Artin 
algebra $R$ satisfies (ARC) if and only if the ring $R^{\Hop}$ satisfies (DARC), it follows that, over an Artinian 
Gorenstein local ring, (ARC) and (DARC) are equivalent.

Although Theorem \ref{3.12} does not provide new instances in which (ARC) is confirmed, it shows that, under certain 
circumstances, the conjecture is equivalent to (DARC).
\end{remark}

We next give an example that provides a broad class of rings satisfying (ARC) by Theorem \ref{3.12}.
To present it, we first recall the definition of quivers and then state a lemma.

Recall that a quiver $\mathcal{Q}$ is a directed graph $(V,E)$, where $V$ and $E$ are respectively the sets of vertices and
arrows of $\mathcal{Q}$, endowed with a pair of functions $s$ and $t$ which assign to any arrow $a$ of $\mathcal{Q}$ its
source and target vertices. If $R$ is a ring and $\mathcal{Q}$ is a quiver, a representation $\mathcal{X}$ of $\mathcal{Q}$
by $R$-modules and $R$-homomorphisms is obtained by associating to each vertex $v$ an $R$-module $\mathcal{X}_v$ and to each
arrow $a : v \rt w$ an $R$-homomorphism $\mathcal{X}_a : \mathcal{X}_v \rt \mathcal{X}_w$. If $\mathcal{X}$ and $\mathcal{Y}$
are two representations of $\mathcal{Q}$, then a morphism $f : \mathcal{X}\rt \mathcal{Y}$ is determined by a family
$\lbrace f_v : \mathcal{X}_v \rt \mathcal{Y}_v \rbrace_{v\in V}$ of $R$-homomorphisms such that for any arrow $a : v \rt w$,
the commutativity condition $\mathcal{Y}_a f_v = f_w \mathcal{X}_a$ holds. For a given quiver $\mathcal{Q}$, the representations
of $\mathcal{Q}$ and the morphisms between them form a category, which is denoted by $\Rep(\mathcal{Q},R)$.

Let $\mathcal{Q}$ be a quiver. Let $R\mathcal{Q}$ be the free $R$-module with basis the set of all paths in $\mathcal{Q}$,
endowed with the following multiplication: if $p$ and $p^{'}$ are paths, let $pp^{'}$ be the concatenation of $p$ and $p^{'}$
provided the tail of $p$ is the head of $p^{'}$, and the zero vector otherwise, and extend this multiplication bilinearly to
$R\mathcal{Q}$. It is known that if $\mathcal{Q}$ is finite, that is, if $V$ and $E$ are both finite, then the category
$\Rep(\mathcal{Q},R)$ is equivalent to the category $\text{Mod-}R\mathcal{Q}$. Note that this equivalence restricts to an
equivalence of categories $\rep(\mathcal{Q},R)$ and $\text{mod-}R\mathcal{Q}$. Here, $\rep(\mathcal{Q},R)$ denotes the
subcategory of $\Rep(\mathcal{Q},R)$ consisting of all finitely generated representations. The proof of this assertion is
similar to the case where $R$ is a field; see \cite[Chapter III, Theorem 1.6]{ASS}.

\begin{lemma}\label{3.13}
Let $\mathcal{Q}\colon 1 \rt 2$ be a quiver. Then:
\begin{itemize}
\item[(i)] If $R$ is a right Artinian ring, then $R\mathcal{Q}$ is also a right Artinian ring.
\item[(ii)] If $R$ is right co-Noetherian, then $R\mathcal{Q}$ is also a right co-Noetherian ring.
\item[(iii)] $R\mathcal{Q}\cong \left(\begin{matrix}
{R}& {R}\\
{0}&{R}
\end{matrix}
\right)$.
\item[(iv)] $(R\mathcal{Q})^{\Hop}\cong R^{\Hop}\mathcal{Q}^{\Hop}$.
\end{itemize}
\end{lemma}

\begin{proof} (i) Since $R$ is a right Artinian ring and $R\mathcal{Q}$ is a finitely generated free $R$-module, it
follows immediately that the ring $R\mathcal{Q}$ is also right Artinian.

(ii) Let $M$ be an arbitrary simple $R\mathcal{Q}$-module. We need to show that the injective hull of $M$ is Artinian.
The representation of $M$ in $\Rep(\mathcal{Q},R)$ is either $r_1=(S \rt 0)$ or $r_2=(0 \rt S)$, where $S$ is a simple
$R$-module. On the other hand, by \cite[Theorem 3.1]{AHK}, the injective hulls of $r_1$ and $r_2$ are given by $E(r_1)
=(E(S) \rt 0)$ and $E(r_2)=(E(S) \rt E(S))$, respectively. Since $R$ is a right co-Noetherian ring, $E(S)$ is an Artinian
$R$-module. Consequently, the injective hull of $M$ is an Artinian $R\mathcal{Q}$-module. Therefore, the ring $R\mathcal{Q}$
is right co-Noetherian.

The proofs of assertions (iii) and (iv) are analogous to the case where $R$ is a field; see \cite[Lemma II.1.12]{ASS} and
\cite[Exercise II.4.1]{ASS}, respectively.
\end{proof}

\begin{example}\label{3.14} Let $\Gamma$ be a right co-Noetherian ring, and let $E_\Gamma$ be a cotilting module.
Set $\Lambda=\End(E_\Gamma)$, and assume that $\Gamma$ is right Artinian and that $\Lambda$ is left Artinian. Put
$R=\left(\begin{matrix}
{\Gamma}& {\Gamma}\\
{0}&{\Gamma}
\end{matrix}
\right)$  and
${S}=\left(\begin{matrix}
{\Lambda}& {\Lambda}\\
{0}&{\Lambda}
\end{matrix}
\right)$.
Set $C=(0\rt E)\oplus (E\rt E)$. By Example \ref{2.6}(iv), $C_R$ is a cotilting $R$-module and $S\cong \End(C_R)^{\Hop}$.
By Lemma \ref{3.13}, the ring $R$ is right Artinian and right co-Noetherian. Moreover, the ring $S$ is left Artinian.
If $R^{\Hop} $ satisfies (DARC), then it follows from Theorem \ref{3.12} that $S$ satisfies (ARC) conjecture.
\end{example}

\section*{Acknowledgement} The authors thank the referee for their careful reading of the manuscript and for the
valuable and constructive comments, which have significantly improved the paper. In particular, addressing the
referee’s second comment led to a substantial improvement of the main theorem. They also thank Asghar Daneshvar
for introducing the papers \cite{J, Hi} and for his insightful comments on this work.


\end{document}